\renewcommand{\Im}{{\operatorname{Im}\, }}
\newcommand{\SL}{{\operatorname{SL}}}
\newcommand{\CC}{{\mathbb{C}}}
\newcommand{\HH}{{\mathbb{H}}}
\newcommand{\PP}{{\mathbb{P}}}
\newcommand{\RR}{{\mathbb{R}}}
\newcommand{\ZZ}{{\mathbb{Z}}}
\newcommand{\calL}{{\mathcal L}}
\newcommand{\calM}{{\mathcal M}}
\newcommand{\calP}{{\mathcal P}}
\newcommand{\calS}{{\mathcal S}}
\newcommand{\calK}{{\mathcal K}}
\theoremstyle{plain}
\newtheorem{thm}{Theorem}
\newtheorem{lm}[thm]{Lemma}
\newtheorem{prop}[thm]{Proposition}
\newtheorem{cor}[thm]{Corollary}
\theoremstyle{definition}
\newtheorem{definition}{Definition}
\newtheorem{rem}[definition]{Remark}
\begin{document}
\title[RN differentials and the elliptic Calogero-Moser system]{Real-normalized differentials and the elliptic Calogero-Moser system}
\author{Samuel Grushevsky}
\address{Mathematics Department, Stony Brook University, Stony Brook, NY 11794-3651, USA}
\email{sam@math.stonybrook.edu}
\thanks{Research of the first author was supported in part by National Science Foundation under the grant DMS-12-01369.}
\author{Igor Krichever}
\address{Columbia University, New York, USA, and Kharkevich Institute for Information Transmission Problems, and Research University ``Higher School of Economics'', and
Landau Institute for Theoretical Physics, Moscow, Russia}
\email{krichev@math.columbia.edu}
\thanks{Research of the second author was supported in part by Russian Fund for fundamental research under the grants 14-01-00012 and 13-01-12469.}


\maketitle

\begin{abstract}
In our recent works \cite{grkrwhitham}, \cite{grkrdeform} we have used meromorphic differentials on Riemann surfaces all of whose periods are real to study the geometry of the moduli spaces of Riemann surfaces. In this paper we survey the relevant constructions and show how they are related to and motivated by the spectral theory of the elliptic Calogero-Moser integrable system.
\end{abstract}

\section*{Introduction}
This paper is part of a series studying the geometry of the moduli space of curves using meromorphic differentials with real periods (so-called real-normalized differentials, see below). These differentials were introduced in \cite{kr86} in full generality by the second author, while the idea for some special case goes back at least to Maxwell. More recently, in \cite{grkrwhitham} we have used these differentials to give a direct proof of Diaz' theorem \cite{diaz} on the dimension of complete subvarieties of the moduli space of curves. Further, in \cite{grkrdeform} we have described the local infinitesimal structure of the foliation on the moduli space defined using the periods of a real-normalized differential. In \cite{grkrno} together with Chaya Norton we studied in detail the behavior of real-normalized differentials under degeneration. In the current paper we first review the main framework of this setup and then present part of the motivation for the constructions using two real-normalized differentials simultaneously. Namely, we show that if one takes the two foliations on the moduli space corresponding to two different real-normalized differentials, then some intersections of their leaves are in fact algebraic subvarieties, which are equal to the suitable loci of spectral curves of the elliptic Calogero-Moser completely integrable system (see theorem \ref{thm:main} for a precise statement). This result provides motivation for some of the conjectures that we make in \cite{grkrdeform}. In \cite{grkrcusps} we will further use this motivation and the degeneration techniques of \cite{grkrno} to obtain results on cusps of plane curves and on the cohomology of the moduli space of curves.

\section{Real-normalized differentials and foliations by absolute periods}
\begin{definition}
Let $\calM:=\calM_{g,1}(1)$ denote the moduli space of genus $g$ Riemann surfaces $C$ with one marked point $p\in C$ and a $1$-jet of a local coordinate at $p$: that is to say a local coordinate $z$ on a small analytic neighborhood of $p$ in $C$, where we identify $z$ and $z'$ iff $z'=z+O(z^2)$. Algebro-geometrically, $\calM$ is the total space of the relative tangent bundle at $p$ over $\calM_{g,1}$ with the zero section removed (as the local coordinate must be non-degenerate). We denote a point $(C,p,z)\in\calM$ by $X$.
\end{definition}

\begin{definition}
A meromorphic differential $\eta$ on a Riemann surface $C$ is called {\em real-normalized} if all its periods are real, i.e.~if for any closed loop $\gamma\in H_1(C,\ZZ)$ we have $\int_\gamma\eta\in\RR$.
\end{definition}
We notice in particular that the residue of a real-normalized differential at any point must be purely imaginary, so that its integral over a small loop around that point is real. From the positive-definiteness of the imaginary part of the period matrix of a Riemann surface it follows that the only {\em holomorphic} real-normalized differential is identically zero. Since any $\RR$-linear combination of real-normalized differentials is real-normalized, it follows that the singularities of a real-normalized differential determine it uniquely, and in particular we have
\begin{prop}
For any $X\in\calM$ there exists a unique meromorphic real-normalized differential $\Psi=\Psi_X$ on $C$ whose only singularity is a double pole at $p$, where it singular part is equal to $dz/z^2$ in the chosen jet.
\end{prop}
We refer to \cite{grkrwhitham} for a detailed proof and more comments and references on the history of real-normalized differentials. For further use, we will also denote $\Psi'=\Psi'_X$ the real-normalized differential with the unique singularity being a double pole at $p$ of the form $idz/z^2$.

More generally, a unique real-normalized meromorphic differential exists for any prescribed collection of singular parts with zero residues. If such a real-normalized differential $\eta$ is exact, $\eta=d f$, then $f:C\to\PP^1$ is a meromorphic function with prescribed poles at the singularities of $\eta$. This is exactly to say that $f$ exhibits $C$ as a cover of $\PP^1$ with prescribed ramification, and the space of such $f$ is precisely the Hurwitz space. The Lyashko-Looijenga coordinates are then defined to be the critical values of $f$, that is the values of $f$ at its critical points, the points where $\eta=df=0$. It is known that Lyashko-Looijenga coordinates give local coordinates on the Hurwitz space (see \cite{lazv} for an exposition of the theory), and similarly one can define local coordinates using real-normalized differentials in general. This construction is also a special (real-normalized) case of the generalization to meromorphic differentials of the well-known construction for holomorphic differentials, see \cite{zorichsurvey} for a survey of these ideas.

For any $X\in\calM$ we denote by $q_1,\ldots,q_{2g}$ the zeroes of $\Psi_X$, written with multiplicity.
\begin{definition}
If $X\in\calM$ is such that all $q_i$ are distinct, we choose a sufficiently small analytic neighborhood of $X$ where the zeroes remain distinct, and moreover over which we can choose a continuously varying symplectic bases $A_1,\ldots,A_g,B_1,\ldots,B_g$ of $H_1(C,\ZZ)$ (that is to say, $A_i\cdot B_j=\delta_{ij}$, while $A_i\cdot A_j=B_i\cdot B_j=0$). The {\em absolute periods} of $\Psi$ are the integrals $\alpha_i:=\int_{A_i}\Psi\in\RR$ and $\beta_i:=\int_{B_i}\Psi\in\RR$. These are real-analytic functions on such a neighborhood of $X$. The {\em relative periods} of $\Psi$ are the integrals $\int_*^{q_1}\Psi,\ldots,\int_*^{q_{2g}}\Psi$, where the paths of integration are chosen not to intersect $A_i$ or $B_i$, and the basepoint $*$ is chosen so that the sum of all relative periods is equal to zero. We note that the full collection of absolute and relative periods is in fact the pairing of $\Psi$ with a basis of the relative homology of the punctured surface $H_1(C\setminus\lbrace p\rbrace,\lbrace q_1,\ldots,q_{2g}\rbrace)$, see \cite{kallakorotkin}.
\end{definition}
It turns out that these periods give local coordinates:
\begin{prop}[\cite{krph}, see also \cite{grkrwhitham}]
The collection of absolute and relative periods gives real-analytic local coordinates, with values in $\RR^{2g}\times\CC^{2g-1}$, near any $X\in\calM$ where all zeroes of $\Psi$ are distinct, which we call the {\em period coordinates}.
\end{prop}
This statement should be viewed as a generalization of the fact that we have Lyashko-Looijenga
coordinates on the Hurwitz space. Indeed, the proposition above holds in full generality for any prescribed singular parts, and in that case if we restrict to the locus where all the absolute periods are zero --- so that $\Psi$ is exact --- the relative periods are precisely the Lyashko-Looijenga coordinates on the corresponding Hurwitz space.
\begin{rem}
The above definition of the period coordinates can be generalized to define local coordinates near a point of $\calM$ where $\Psi$ has multiple zeroes --- in that case locally a zero of $\Psi$ of multiplicity $k$ may be perturbed to become at most $k$ zeroes. In a neighborhood, we then choose paths to these at most $k$ zeroes that coincide outside a small neighborhood of zero (basically going to where the $k$-multiple zero was, and then perturbing the ends of the paths to go to the individual zeroes), and then instead of individual relative periods consider the symmetric function of the corresponding up to $k$ relative periods, counted with multiplicity (so that at the original point we have symmetric functions of a $k$-tuple of equal values). This is discussed in more detail \cite{grkrwhitham}, and provides a viewpoint on the coordinates on various strata of holomorphic differentials with prescribed configurations of zeroes studied in Teichm\"uller dynamics --- but will not play in a role in the current paper.
\end{rem}
A crucial observation that made the results of \cite{grkrwhitham} possible is that while the values of absolute periods are not well-defined on $\calM$, as they depend on a choice of a symplectic homology basis, the condition that they are locally constant is well-defined.
\begin{definition}
For any $X\in\calM$ we define a {\em leaf of a (big) foliation} $\calL_X$ passing through $X$ to locally be the locus of points in $\calM$ where the absolute periods are constant and equal to those on $X$. As noted above, this condition is independent of the choice of the symplectic homology basis, and we thus define a foliation $\calL$ of $\calM$ to consist of these leaves (the analogous foliation for holomorphic differentials is sometimes called the REL foliation or the absolute period foliation in Teichm\"uller dynamics. We note that relative periods give local holomorphic coordinates on each leaf, and we thus have a tangentially complex foliation on $\calM$: the tangent space to $\calL_X$ at any $X$ is complex. In particular note that all leaves of $\calL$ are smooth immersed complex submanifolds of $\calM$ of complex codimension $g$.
\end{definition}
Note that the leaves of $\calL$ are only defined locally, and their global geometry in general can be extremely complicated, with the closure in the Deligne-Mumford compactification being especially badly behaved, see \cite{mcmullen} for a study of a similar situation for holomorphic differentials, and \cite{grkrno} for a study of degenerations and the behavior of local coordinates in our setting. However, note that (for more general singularities) a leaf of $\calL$ corresponding to all absolute periods being zero is the suitable Hurwitz space, and is algebraic. More generally, if say all absolute periods $\alpha_i,\beta_i$ on a leaf $\calL$ are rational (or, still more generally, generate an additive subgroup of $\RR$ in which $0$ is an isolated point), then $\calL$ is an embedded (as opposed to only immersed) submanifold of $\calM$. Indeed, if two points $X_1,X_2$ that are close in $\calM$ both lie in $\calL$, in a neighborhood of $X_1$ choose a continuously varying basis for $H_1(X,\ZZ)$. Then the periods of $\Psi$ over this basis must lie in the same discrete subgroup of $\RR$ for both $X_1$ and $X_2$, and thus all the periods of $\Psi$ over this basis of cycles must be the same for $X_1$ and $X_2$. This means that $X_1$ and $X_2$ lie on the same connected component of the intersection of $\calL$ with a small neighborhood of $X_1$, which is to say that the intersection of $\calL$ with a small neighborhood in $\calM$ of any its point is connected, and thus $\calL\subset\calM$ is then embedded.

\smallskip
All of the above constructions were completely general, and developed in \cite{grkrwhitham} in full generality for arbitrary prescribed singularities. We now take into account the specifics of our situation, where we naturally have {\em two} real-normalized differentials $\Psi$ and $\Psi'$. We first have the easy observation:
\begin{lm}\label{lm:R2d}
For any $X=(C,p,z)\in\calM$ any real-normalized differential on $C$ with a double pole at $p$ is an $\RR$-linear combination of $\Psi_X$ and $\Psi'_X$.
\end{lm}
\begin{proof}
Indeed, suppose such a real-normalized differential $\eta$ has a singular part $(a+bi)dz/z^2$, for $a,b\in\RR$ (notice that $\eta$ cannot have a residue at its unique pole on a compact Riemann surface). Then the differential $\eta-a\Psi-b\Psi'$ is a holomorphic real-normalized differential on $C$, and thus must be zero.
\end{proof}
It thus turns out that some properties of the pair $(\Psi,\Psi')$ are independent of the choice of the local coordinate, and thus make sense on $\calM_{g,1}$ --- this will be used in \cite{grkrcusps} to study the homology classes of the loci in $\calM_{g,1}$ where $\Psi$ and $\Psi'$ have a prescribed number of common zeroes. We thus overlap the above structures defined by $\Psi$ and $\Psi'$.
\begin{definition}
For any $X\in\calM$, let $\calL_X$ be the leaf of the foliation defined above. Let $\calL'_X$ be the leaf of the analogous foliation on $\calM$ defined using the period coordinates associated to $\Psi'$. We then let $\widehat\calS_X=\calL_X\cap\calL'_X\subset\calM$ be the intersection of these two leaves. From the lemma above we see that the condition of local constancy of absolute periods of {\em both} $\Psi$ and $\Psi'$ implies the constancy of periods of the real-normalized differential with any fixed double pole. That is to say, $\widehat\calS_X$ does not depend on the choice of a local coordinate, and is a preimage of some locus $\calS_X\subset\calM_{g,1}$.
\end{definition}

By a slight abuse of language, we will call $\calS$ the ``small'' foliation on $\calM$, and call $\calS_X$ its leaves. Here we use the term foliation loosely: indeed, one of the main conjectures of \cite{grkrdeform} is precisely the statement that all the leaves $\calS_X$ are smooth (and of the same dimension). Thus what we mean by a foliations is only that there exists a subvariety $\calS_X$ through every point of $\calM_{g,1}$, and distinct such sets do not intersect.
However, the main result of this paper is precisely the statement that there exists an everywhere dense collection of leaves $\calS_X$ that are in fact smooth algebraic subvarieties of $\calM$. To prove this, we identify these leaves as the loci of suitable spectral curves of the elliptic Calogero-Moser integrable systems, and thus {\em any} leaf $\calS_X$ can be interpreted as a suitable perturbation of the spectral curves elliptic Calogero-Moser system, which motivates our conjectures in \cite{grkrdeform}.

\section{The Calogero-Moser locus via real-normalized differentials}
\begin{definition}
We define the Calogero-Moser locus $\calK_g\subset\calM_{g,1}$ to be the locus of all $(C,p)$ for which
there exist two $\RR$-linearly independent differentials of the second kind $\Phi_1,\Phi_2\in
H^0(C,K_C+2p)$ with all periods {\it integer}.
\end{definition}
We note that in particular $\Phi_1$ and $\Phi_2$ are real-normalized, and since by lemma \ref{lm:R2d} the
space of real-normalized differentials with a unique double pole is two-dimensional over $\RR$, any
real-normalized differential with a unique double pole at $p$ is equal to $r_1\Phi_1+r_2\Phi_2$ for
some $r_1,r_2\in\RR$. We thus have
\begin{prop}
The Calogero-Moser locus $\calK_g\subset\calM_{g,1}$ is the union of all leaves of the small
foliation $\calS$ for which there exist $r_{1},r_{2},r_{1}',r_{2}'\in\RR$ such that
all (absolute) periods of $\Psi$ lie in $r_1{\mathbb Z}+r_2{\mathbb Z}$ and all periods of $\Psi'$ lie in $r_1'\ZZ+r_2'\ZZ$.
\end{prop}
\begin{rem}
We note that the locus $\calK_g$ is easily seen to be dense in $\calM_{g,1}$, as for example it includes the set of all curves for which all absolute periods of $\Psi$ and $\Psi'$ are rational (there are finitely many periods, so we can just choose $r_1=r_1'$ to be the the inverse of the largest denominator). Of course the locus $\calK_g$ consists of infinitely many leaves of the foliation $\calS$ corresponding to different $r$'s, and has infinitely many connected components --- distinguished at least by the least common multiple of the denominators of periods. Instead of working with all of $\calK_g$, we will thus first represent it as a countable union of loci corresponding to different degrees of the covers of the elliptic curve that is inherent in the picture.
\end{rem}
Indeed, note that if $\Phi_1$ and $\Phi_2$ have integer periods, then so does any their $\ZZ$-linear combination. Thus for $(C,p)\in\calK_g$
we choose $\Phi_1,\Phi_2$ generating the lattice in $H^0(C,K_C+2p)$ for which all periods are integer, and let
\begin{equation}\label{cmtau}
\tau:=\frac{\Phi_2}{\Phi_1}(p)
\end{equation}
(which means taking the ratio of the singular parts of $\Phi_2$ and $\Phi_1$ at $p$).
Since $\Phi_i$ are $\RR$-linearly independent, $\Im \tau \ne 0$, and by swapping
$\Phi_1$ and $\Phi_2$ if necessary we may assume that $\Im\tau>0$.
While $\tau$ depends on the choice of generators of the lattice, and is only well-defined up to an action of $\SL(2,\ZZ)$, note that the differential $\Phi_2-\tau \Phi_1$ is {\em holomorphic} on $C$, and all its periods lie in $\mathbb Z+\tau\mathbb Z$. Thus integrating this differential gives
a holomorphic map
\begin{equation}\label{cmcover}
z: C\to E:=\CC/\ZZ+\tau\ZZ;\qquad q\mapsto z(q):=\int_{p}^q (\Phi_2-\tau\Phi_1)
\end{equation}
We note that the isomorphism class of the elliptic curve $E$ and the map $z:C\to E$ do not
depend on the choice of $\Phi_1,\Phi_2$.
\begin{definition}
For $N\in\ZZ$ we denote $\calK_{g,N}\subset\calK_g$ the locus of Calogero-Moser curves for which the
degree of the map $z:C\to E$ is equal to $N$. We will see that $\calK_{g,N}$ is
empty if $N<g$ as will follow from our explicit parametrization of these loci; it can also be shown that for any $N\ge g$ the locus $\calK_{g,N}$ is in fact non-empty, as can be seen by studying suitable degenerations and then perturbing --- but we will not need this result here.
\end{definition}
Since the degree is an integer that depends continuously on the Calogero-Moser curve, it is locally constant
on $\calK_g$, and thus each $\calK_{g,N}$ is a union of some collection of connected
components of $\calK_g$. Analytically, $N$ can be computed as
\begin{equation}\label{cm3}
N=\sum_{k=1}^g\left(\int_{A_k}\Phi_2\int_{B_k}\Phi_1-\int_{B_k}\Phi_2
\int_{S_k}\Phi_1\right)=2\pi i\,{\rm res}_{p}\left(F_1 \Phi_2\right).
\end{equation}
In what follows we will always fix the generators $\Phi_1,\Phi_2$, or equivalently fix a basis of $H_1(E,\ZZ)\equiv \ZZ^2$. For any $\tau\in\HH/\SL(2,\ZZ)$ we then denote $\calK_{g,N}^\tau\subset \calK_{g,N}\subset\calK_g\subset\calM_{g,1}$ the
subset where $E=E_\tau$. 
Since the above constructions and definition only depend on the absolute periods of $\Psi_1$ and $\Psi_2$, we have
the following
\begin{prop}\label{prop:CMisleaves}
The locus $\calK_{g,N}^\tau$ is a union of leaves of the small foliation $\calS\subset\calM_{g,1}$; that is to say,
if any leaf of $\calS$ intersects $\calK_{g,N}^\tau$ (for $N\in\ZZ,\tau\in\HH/\SL(2,\ZZ)$ fixed), then this
leaf is contained in $\calK_{g,N}^\tau$.
\end{prop}
The main result of this paper is justifying the name ``Calogero-Moser'' for this locus, i.e.~the identification of $\calK_g$ as the locus of spectral curves of the elliptic Calogero-Moser system. Our main result is the following
\begin{thm}\label{thm:main}
The locus $\calK_{g,N}$ is the locus
of curves that are normalizations $\widetilde C_{cm}$ of spectral curves $C_{cm}$ of the
$N$-particle elliptic Calogero-Moser system (i.e.~of curves given by \eqref{r} below).
\end{thm}
Note that in particular this theorem implies that the loci $\calK_{g,N}^\tau$ and $\calK_{g,N},$ are all algebraic, as they arise from the algebro-geometric constructions associated to the elliptic Calogero-Moser system.

\section{The elliptic Calogero-Moser system}
\begin{definition}
The {\em elliptic Calogero-Moser (CM) system} introduced in \cite{calogero}
is a system of $N$ particles on an elliptic curve $E$ with
pairwise interactions. The phase space of this system is
$$\begin{aligned}
\calP_N:&=(\CC\times E)^{\times N}\setminus\{{\rm diagonals\ in\ }E\}\\
&=\left\{q_1,\ldots,q_N\in\CC, x_1,\ldots,x_N\in E, x_i\neq x_j\right\},
\end{aligned}
$$
where we think of the variables $x_i$ as the positions of the particles, and of $q_i$ as their momenta, lying in the cotangent space to $E$, which is trivial and identified with $\CC$.
The evolution of this system is a trajectory of a set of particles in the phase
space.

The {\em elliptic Calogero-Moser Hamiltonian} is the meromorphic function $H_2:\calP_N\to\CC$ given by
\begin{equation}\label{H}
H_2:=\frac{1}{2}\sum_{i=1}^N q_i^2 - 2 \sum_{i\ne j}\wp(x_i-x_j),
\end{equation}
where $\wp$ denotes the Weierstrass $\wp$-function on $E$.
\end{definition}
The Hamiltonian equations of motions are then
$$
 \dot x_i=-\frac{\partial H_2}{\partial q_i};\quad  \dot q_i=\frac{\partial H_2}{\partial x_i};
$$
where from now on the dot denotes the partial derivative $\partial/\partial t$ with respect to
time. These equations of motion determine the evolution of the system of particles completely starting from the given initial conditions.

In \cite{kr-cm} the second author showed that the equations of motion of the elliptic CM system
admit a Lax representation with ``elliptic spectral parameter $z$''.
This is to say that the Hamiltonian equations of motion above for the Hamiltonian $H_2$ are
equivalent to the matrix-valued differential equation  $\dot L=[L,M]$, where~$L=L(z)$ and~$M=M(z)$ are
$N\times N$ matrices depending on the point $z\in E$, given explicitly by
\begin{equation}\label{matrixL}
 L_{ii}(z)=\frac12\ q_i;\quad L_{ij}(z)=F(x_i-x_j,z) \quad {\rm for\ } i\neq j,
\end{equation}
and
\begin{equation}\label{matrixM}
M_{ii}(z)=\wp(z)-2\sum_{k\neq i}\wp(x_i-x_k);\quad
M_{ij}(z)=-2 F' (x_i-x_j ,z) ,
\end{equation}
with the function $F$ defined by
\begin{equation}
 F(x,z):=\frac{\sigma(z-x)}{\sigma(z) \sigma(x)} e^{\zeta(z)x},
\end{equation}
for $\zeta$ and $\sigma$ the standard Weierstrass elliptic functions, and where $F'$ denotes the derivative of $F$ with respect to $x$.
\begin{definition}
The {\em spectral curve} $C_{cm}$ of the elliptic CM system is the normalization
at the point $(k,z)=(\infty,0)$ of the closure in ${\mathbb P}^1\times E$ of the affine curve $C_{cm}^o\subset \CC\times(E\setminus\{0\})$
given by the equation
\begin{equation}\label{r}
 R(k,z):=\det (k\cdot I+L(z))=0,
\end{equation}
where $I$ is the identity matrix. For further use, we expand this determinant as a polynomial in powers of $k$,
denoting the coefficients $r_i(z)$, so that $R(k,z)=\sum_{i=0}^N r_i(z)k^{N-i}$, in particular with $r_0(z)=1$.
We note that $C_{cm}$ is singular at all singularities of $C_{cm}^o$, and denote $\widetilde C_{cm}$ the normalization of $C_{cm}$.
\end{definition}
\begin{rem}
It is easy to see that the Lax equation $\dot L=[M,L]$ directly implies that the characteristic  equation satisfied by the differential operator $L$ does not depend on $t$, i.e.~the spectral curve can be regarded as ``integrals of motion'' (is time-invariant).
The general algebro-geometric integration scheme of soliton systems based on a concept of the Baker-Akhiezer functions in fact establishes the one-to-one correspondence of the open sets of the phase space of the system and the Jacobian bundle over the family of the corresponding spectral curves. Under this correspondence the equations of motion of the system become the equations of the linear flow on the Jacobian.
\end{rem}
{}From the Riemann-Hurwitz formula it follows that the arithmetic genus of $C_{cm}$ is equal to $N$; thus the genus of its normalization $\widetilde{C}_{cm}$ is strictly less than $N$ if and only if $C_{cm}$ is singular.

{}From the explicit formula \eqref{matrixL} for $L(z)$ one sees that each $r_i(z)$ is a meromorphic function of $z\in E$ with a pole of order $i$ at $z=0$.
As shown in \cite{kr-cm}, near $z=0$ the polynomial $R(k,z)$
admits a factorization of the form
\begin{equation}\label{r1}
R(k,z)=\prod_{i=1}^N(k+a_iz^{-1}+h_i+O(z)),
\end{equation}
with $a_1=1-N$ and $a_i=1$ for $i>1$, for some $h_i\in \CC$. This implies that the closure $\overline{C_{cm}^o}\subset\PP^1\times E$ of $C_{cm}^o$ is obtained by adding one point $(\infty,0)$, at which $N-1$ branches of $\overline{C_{cm}^o}$ are tangent to each other (corresponding to $a_2=\ldots=a_N=1$), and one branch is transverse to them. Thus if we blow up the point $(\infty,0)\in\PP^1\times E$, on the strict transform of $\overline{C_{cm}^o}$ under this blowup we would have a smooth point $p$ corresponding to the first branch, and a point $p'$ contained in the $N-1$ branches will pass. Thus generically the partial normalization $C_{cm}$ of $\overline{C_{cm}^o}$ at $(\infty,0)$ is obtained by doing the second blowup at $p'$, and irrespective of this we have $\#\{C_{cm}\setminus C_{cm}^o\} =N$.

\begin{prop}[\cite{pd}]
For a fixed elliptic curve $E$ and a fixed integer $N$ the space of Calogero-Moser spectral curves $C_{cm}$ is equal to $\CC^N$,
i.e.~is parameterized by $N$ free complex parameters.
\end{prop}
These $N$ free complex parameters were found in \cite{pd} by observing that a polynomial $R(k,z)$ has a unique representation of the form
\begin{equation}\label{phdoker1}
 R(k,z):=f(k-\zeta(z),z),\quad
\end{equation}
where
\begin{equation}\label{ph}
f(\phi,z)=\frac{1}{\sigma(z)}\ \sigma\left(z+\frac{\partial}{\partial \phi}\right)H(\phi)=\frac{1}{\sigma(z)}\ \sum_{n=0}^N \frac{1}{ n!}\,\partial_z^{\,n} \sigma(z) \frac{\partial^n H}{\partial \phi^n}.
\end{equation}
and $H$ is the monic degree $N$ polynomial
$$
 H(\phi)=\phi^N+\sum_{i=0}^{N-1} I_i \phi^i
$$
whose coefficients $I_0,\ldots,I_{n-1}\in\CC$ of
give parameters for the space of Calogero-Moser spectral curves.

\bigskip
We are now ready to prove one direction of our main result, that the spectral curves of the elliptic Calogero-Moser system in fact lie in the locus $\calK_g$ defined using differentials with real periods:
\begin{prop}
For a fixed elliptic curve $E=E_\tau$ and a fixed integer $N$, the normalization $\widetilde C_{cm}$ of the spectral curve $C_{cm}$ of the Calogero-Moser system
lies in the Calogero-Moser locus $\calK_{g,N}^{\tau}\subset\calM_{g,1}$.
\end{prop}
\begin{proof}
Indeed, to prove this we need to construct two differentials $\Phi_1$ and $\Phi_2$ on $C_{cm}$ with all periods integer (and then also verify that the resulting $N$ is correct). To construct these differentials, we will think of the curve $C_{cm}^o\subset \CC\times(E\setminus\{0\})$, so that we can pull back differentials from both factors, and try to look for $\Phi_i$ of the form $f_i(k)dk +g_i(z) dz$. Note that since all periods of differentials on $\CC$ are zero, the first summand contributes nothing to the periods of $\Phi_i$, while the periods of the second summand are the periods of that differential on the image in $E$ of a cycle in $C$, and thus lie in $\ZZ+\tau \ZZ$. Thus we will take $g_i$ so that the two periods of the meromorphic differential $g_i(z)dz$ on $E$ are integer, and choose $f_i$ to ensure that the singularities are as required.

Indeed, when we compactify, the differential $dk$ has a double pole at $(\infty,0)\in\PP^1\times E$. As discussed above, this point has $N$ preimages on $C_{cm}$ corresponding to the $N$ local  branches near $(\infty,0)$, with local expressions given by \eqref{r1}, and the preimage of $(\infty,0)$ under the first blowup consists of two point $p$, with one branch through it, and $p'$ lying on $N-1$ branches. Since $dk$ has a double pole at $\infty\in\PP^1$, at all the $N-1$ branches where the coefficients $a_i=1$ in \eqref{r1}, so that the branch is locally given by $k+z^{-1}+h_i=O(z)$, we can cancel the double pole of $dk$ near $k=\infty$ by taking locally $-dz/z^2$; since we are working on the elliptic curve, this means we should globally take $-\wp(z)dz$, which precisely this double pole at $z=0$. Thus we are looking for $\Phi_i=a_i (dk -\wp(z)dz)+c_idz$ for some constants $a_i,c_i\in\CC$, where the constant $c_i$ is determined to ensure that the periods are integers. Solving we thus get
\begin{equation}\label{cm10}
 \Phi_1:=\frac{1}{2\pi i}(dk-\wp(z)dz)+c_1dz, \qquad\Phi_2:=\frac{\tau}{2\pi i}(dk-\wp(z)dz)+c_2dz.
\end{equation}
where
$$
  c_1:=\frac{1}{2\pi i}\int_0^1 \wp(z)dz\quad{\rm and}\quad c_2:=\frac{1}{2\pi i}\int_0^\tau \wp(z)dz.
$$
We have thus shown that the curve $C_{cm}$ indeed lies in $\calK_g$, and by construction the value of $N$ is as required.
\end{proof}

\bigskip
The other direction of the main result, theorem \ref{thm:main}, is the statement that any curve
$(C,p)\in \calK_{g,N}^{\tau}$, arises as the spectral curve $C_{cm}$ of the elliptic Calogero-Moser system.
This uses the methods of integrable systems: in \cite{kr1,kr2} the second author gave a general construction to obtain
an algebro-geometric solution of the KP equation starting from such a curve.
The statement that we need to prove is essentially that in the case when $\Phi_1$ and $\Phi_2$ both have
integral periods the solution of KP equation obtained in this way is elliptic.

To explain how this argument works, we recall the definition of the Baker-Akhiezer function and related constructions.



\begin{definition}
For $(C,p,z)\in\calM$, for a fixed and a generic set of $g$ points $\gamma_1,\ldots,\gamma_g\in C$
(that is, forming an effective divisor $Z_0:=\gamma_1+\ldots+\gamma_g$  of degree $g$ on $C$ with $h^0(C,D)=1$),
and for fixed $x,t\in\CC$ the Baker-Akhiezer function $\psi(x,t,p)$ is the unique
function on $C$, which is meromorphic on $C\setminus\{p\}$, with only singularities being the simple poles
at $\gamma_i$, and such that  in a neighborhood of $p$ it has an essential singularity that admits an expression
of the form
\begin{equation}\label{baexp}
\psi(x,t,z)=e^{xz^{-1}+tz^{-2}}\left(1+\sum_{s=1}^\infty \xi_s(x,t)z^{-s}\right)
\end{equation}
where each $\xi_s$ is some holomorphic function of $x,t$.
\end{definition}
The uniqueness of the Baker-Akhiezer function follows easily from observing that the ratio of two such functions would be
holomorphic on all of $C$, since the simple poles cancel, with value 1 at $p$, where the essential singularities
cancel. To see the existence of the Baker-Akhiezer function, note that an explicit expression for it was obtained in
\cite{kr1}:
\begin{equation}\label{psitheta}
\psi(x,t,q)=\frac{\theta (A(q)+Ux+Vt+Z_0)\,\theta (A(q)+Z_0)}{ \theta (A(q)+Ux+Vt+Z_0)\,
\theta (A(q)+Z_0)}\, e^{x\int^q\Omega_2+t\int^p\Omega_3},
\end{equation}
where $A:C\hookrightarrow J(C)$ is the Abel-Jacobi embedding of the curve into its Jacobian, and
$U$ and $V$ are the vectors of $B$-periods of the normalized (i.e.~with all $A$-periods zero) differentials $\Omega_2$ and
$\Omega_3$, with poles at $p$ of second and third order, respectively, and holomorphic elsewhere.

The construction of CM curves was crucial for the identification of the theory of the CM system and the theory of the elliptic solutions of the Kadomtsev-Petviashvili (KP) equation established in \cite{kr-cm}.
This identification is based on the following result:

\begin{lm}\label{genform}
The equation
\begin{equation}\label{lax11}
\left(\partial_t-\partial_x^2+u(x,t)\right)\psi(x,t)=0
\end{equation}
with elliptic potential (i.e. u(x,t) is an elliptic function of the variable $x$) has a meromorphic
in $x$ solution $\psi$ if and only if $u$ is of the form
\begin{equation}\label{u_ansatz}
u=2\sum_{i=1}^N \wp (x-x_i(t))
\end{equation}
with poles $x_i(t)$ satisfying the equations of motion of the CM system.
\end{lm}
\begin{rem}
In \cite{kr-cm} a slightly weaker form of the lemma was proven. Namely, its assertion was proved under the assumption that equation \eqref{u_ansatz} has a family of {\it double-Bloch} solutions (i.e. meromorphic solutions
with monodromy $\psi(x+\omega_\alpha,t)=w_\alpha \psi(x,t)$, where $\omega_a$ are periods of the elliptic curve and $w_a$ are constants.) This weaker version is sufficient for our further purposes, but
for completeness we included above the strongest form of the lemma, proven in \cite{flex} (see
\cite{kr-schot} for details).
\end{rem}

As shown in \cite{kr1}, the Baker-Akhiezer function satisfies partial differential equation
\eqref{lax11} with the potential $u(x,t)$ given explicitly as
\begin{equation}\label{utheta}
 u(x,t)=2\partial_x^2\ln \theta(Ux+Vt+Z_0).
\end{equation}

We will now use the Baker-Akhiezer function to obtain our main result.
\begin{proof}[Proof of main theorem \ref{thm:main}]
Starting from any curve $(C,p,z)$ (and a collection of $g$ points on $C$
in a general position) we can construct uniquely a Baker-Akhiezer function,
given explicitly by \eqref{psitheta}. We first show that the curves $(C,p)\in\calK_g$
are characterized within $\calM_{g,1}$ by the property that the vector $U$ in
\eqref{psitheta},\eqref{utheta} spans an elliptic curve in the Jacobian of $C$
(i.e. ~$\CC U\subset J(C)$ is closed).

Indeed, recall that $U$ is the vector of $B$-periods of the meromorphic differential
$\Omega_2$, which has a double pole at $p$, and all of which $A$-periods are zero.
For $(C,p)\in\calK_g$ we then have two holomorphic differentials $\Omega_2-\Phi_1$
and $\tau\Omega_2-\Phi_2$. Since all the $A$-periods of $\Omega_2$ are zero,
the $A$-periods of these two differentials are all
integer, and thus both $\Omega_2-\Phi_1$ and $\tau\Omega_2-\Phi_2$ must be linear combinations
of a basis $\omega_1\ldots \omega_g$ of holomorphic differentials on $C$ dual to the $A$-cycles,
with {\em integer} coefficients. Thus we have
$$
  \Omega_2=\Phi_1+w_1=(\Phi_2+w_2)/\tau
$$
where holomorphic differential $w_1,w_2$ are integral linear combinations of $\omega_i$.
From the first equality it follows that the vector $U$ of $B$-periods of $\Omega_2$ is equal
to the sum of $B$-periods of $\Phi_1$, which are integers, and the $B$-periods of $w_1$, which
are integral linear combinations of the periods of $\omega_i$. This means that we have
$U\in\ZZ^g+\tau_C\ZZ^g$, where $\tau_C$ is the period matrix of $C$
(the matrix of $B$-periods of $\omega_i$). Similarly from the second expression for $\Omega_2$
it follows that also $\tau U\in\ZZ^g+\tau_C\ZZ^g$. Finally since $\CC U=\RR U+\RR\tau U$ is
then a complex line containing two non-proportional vectors in the lattice $\ZZ^g+\tau_C\ZZ^g$,
its image in the Jacobian $J(C):=\CC^g/\ZZ^g+\tau_C\ZZ^g$ is compact, and thus $U$ spans
an elliptic curve in $J(C)$.

Let now $N$ be the degree of the restriction of the theta function from $J(C)$ to the elliptic
curve $E$ generated by $U$. Then the restriction of the theta function of $J(C)$ to $E$
can be written as a product in terms of its zeroes using the elliptic $\sigma$ function:
\begin{equation}\label{restriction}
\theta(\tau_C,Ux+Vt+Z_0)=f(t,Z_0)\prod_{i=1}^N \sigma(x-x_i(t))
\end{equation}
for $t$ and $Z_0$ fixed, where $f$ is non-zero (and of course depends on $t$ and $Z_0$
holomorphically), and $x_1(t),\ldots x_N(t)$ are the zeroes of the restriction of the theta
function of $J(C)$ to the corresponding translate of $E$.
Substituting this expression into \eqref{utheta} implies that $u$ is of the form \eqref{u_ansatz}. The Baker-Akhiezer function is a meromorphic function of $x$. {}From lemma \ref{genform} it then follows that
$q_i(t)$ in \eqref{restriction} satisfy the equations of motion of the CM system.
\begin{rem}
For the last statement a weaker version of lemma \ref{genform} suffices, because from the definition
of the Baker-Akhiezer function it follows that it has monodromy given by
\begin{equation}\label{doubleBloch}
\psi (x+1,t,p)=e^{2\pi i F_1(p)}\psi(x,t,p),\qquad
\psi(x+\tau,t,p)=e^{2\pi i F_2(p)}\psi(x,t,p).
\end{equation}
Such monodromy was called in \cite{kr-bete} the double-Bloch property.
Geometrically, it means that as a function of $x$ for $t$ and $p$ fixed, $\psi$ is a (meromorphic) section of a certain
bundle on the elliptic curve $E=\CC/\ZZ+\tau\ZZ$, where this bundle depends on $p$.
\end{rem}

Thus, starting from $(C,p)\in\calK_{g,N}$, we have used the Baker-Akhiezer function (which is a solution
of the KP system) to then construct a solution of the elliptic Calogero-Moser system. This elliptic Calogero-Moser system has a spectral curve given explicitly by equation
\eqref{r}. It thus remains to show that the $\widetilde C_{cm}$ of the spectral curve of this CM system indeed
coincides with the original curve $C$. To this end it is enough to check that
\begin{equation}\label{double-Bloch1}
 \psi(x,t,p)=\sum_{i=1}^N c_i(t,p) F(x-x_i(t),z)e^{kx+k^2t}
\end{equation}
satisfies all the defining properties of the Baker-Akhiezer function on $\widetilde C_{cm}$ --- and thus by uniqueness is the Baker-Akhiezer function.
Here the functions $c_i$ are coordinates of the vector $C=(c_1,\ldots,c_N)$ satisfying
\begin{equation}
  (L(t,z)+k)\,\, C=0,\quad \dot C=M(t,z)C\,.
\end{equation}
This verification is straightforward, and the proof is thus complete.
\end{proof}

Since we have constructed all curves in $\calK_g$ as normalizations of spectral
curves of the Calogero-Moser system, and normalizing can only reduce the arithmetic genus,
we get in particular
\begin{cor}
The locus $\calK_{g,N}^{\tau}$ is empty if $g>N$.
\end{cor}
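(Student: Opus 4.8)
The plan is to read off the statement directly from the identification of the Calogero--Moser locus with the locus of normalized spectral curves established in Proposition \ref{prop:CMareleaves}, combined with the genus bound for those spectral curves. Since $\K_{g,N}^\tau\subset\K_{g,N}$, it suffices to prove that $\K_{g,N}$ itself is empty whenever $g>N$.

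First I would invoke Proposition \ref{prop:CMareleaves}, by which every point of $\K_{g,N}$ is the normalization $\wt\G_{cm}$ of a spectral curve $\G_{cm}$ of the $N$-particle Calogero--Moser system, i.e.~of a curve cut out by equation (\ref{r}). Next I would use the Riemann--Hurwitz computation recorded above, which gives that the arithmetic genus of $\G_{cm}$ is exactly $N$. Finally, normalization resolves the singularities of $\G_{cm}$ and can only decrease the arithmetic genus---the drop being the sum $\sum_p\delta_p$ of the local delta-invariants at the singular points---so the geometric genus $g$ of $\wt\G_{cm}$ satisfies $g\le N$. Hence for $g>N$ no curve of $\K_{g,N}$ can exist, and the locus is empty.

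I do not expect any real obstacle here: all the substantive work is carried by Proposition \ref{prop:CMareleaves}, and what remains is the elementary fact that normalizing a curve cannot raise its genus. The only point worth stating with care is the borderline behavior, namely that $g=N$ forces $\G_{cm}$ to be smooth (so that $\K_{N,N}$ parameterizes the smooth spectral curves), while $g<N$ records the normalizations of the singular ones; this is precisely the dichotomy already observed in the proof of Lemma \ref{lemma}.
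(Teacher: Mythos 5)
Your proof is correct and follows the paper's own argument exactly: the paper derives the corollary in one line from Proposition \ref{prop:CMareleaves} (every curve in $\K_{g,N}$ is the normalization of an $N$-particle spectral curve), the Riemann--Hurwitz fact that $\G_{cm}$ has arithmetic genus $N$, and the observation that normalization can only decrease genus. Your additional remark on the borderline case $g=N$ versus $g<N$ matches the dichotomy the paper records after its Riemann--Hurwitz computation, so there is nothing to add.
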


\begin{rem}
We note that $\calK_{g,g}$ by construction is the locus of spectral curves that are smooth. indeed, for $C_{cm}\in\calK_{g,g}$, the differentials $\Phi_1$ and $\Phi_2$ cannot have common zeroes. If for some point  $p\in C_{cm}$ we had $\Phi_1(p)=\Phi_2(p)=0$, then also $dk(p)=dz(p)=0$, as these two differentials are linear combinations of $\Phi_1$ and $\Phi_2$. However, if both $dk$ and $dz$ vanish at a point of $\overline{C_{cm}^o}$, this point is singular, while we assumed the curve to be smooth. Following this line of thought, one would expect the common zeroes of $\Phi_1$ and $\Phi_2$ on Calogero-Moser curves to be closely related to the singularities of the curve. For the two simplest possible classes of singularities --- nodes and simple cusps --- the situation is as follows: the differentials $\Psi_1$ and $\Psi_2$ (or equivalently $dk$ and $dz$) do not have a zero at a point of $\widetilde C_{cm}$ that is a preimage of a node on $C_{cm}$, and have a simple common zero at a preimage of a cusp (and a multiple common zero at a preimage of any more complicated singularity). It can be shown that a Zariski open subset of $\calK_{g,N}$ corresponds to singular CM curves having $N-g$ nodes.
\end{rem}


\begin{thebibliography}{}
\bibitem[Cal75]{calogero}
F.~Calogero.
\newblock Exactly solvable one-dimensional many-body problems.
\newblock {\em Lett. Nuovo Cimento (2)} 13(11): 411--416, 1975.
\bibitem[Dia84]{diaz}
S.~Diaz.
\newblock A bound on the dimensions of complete subvarieties of {${\mathcal
  M}_{g}$}.
\newblock {\em Duke Math. J.}, 51(2):405--408, 1984.
\bibitem[D'HP98]{pd}
E.~D'Hoker and D.~Phong.
\newblock Calogero-Moser Systems in $SU(N)$ Seiberg-Witten Theory.
\newblock {\em Nucl.~Phys. B}, 513:405-444, 1998.
\bibitem[DH75]{douadyhubbard}
A.~Douady, J.~Hubbard:
\newblock On the density of Strebel differentials.
\newblock {\em Invent.~Math.}~30:175--179, 1975.
\bibitem[GK09]{grkrwhitham}
S.~Grushevsky and I.~Krichever.
\newblock The universal {W}hitham hierarchy and the geometry of the moduli
  space of pointed {R}iemann surfaces.
\newblock In {\em Surveys in differential geometry. {V}ol. {XIV}. {G}eometry of
  {R}iemann surfaces and their moduli spaces}, volume~14 of {\em Surv. Differ.
  Geom.}, pages 111--129. Int. Press, Somerville, MA, 2009.
\bibitem[GK18a]{grkrdeform}
S.~Grushevsky and I.~Krichever.
\newblock Real-normalized differentials: foliations and deformations,
\newblock In preparation (part of version 1 of arXiv:1108.4211).
\bibitem[GK18b]{grkrcusps}
S.~Grushevsky and I.~Krichever.
\newblock Real-normalized differentials and cusps of plane curves.
\newblock {\em In preparation}.
\bibitem[GKN17]{grkrno}
S.~Grushevsky, I.~Krichever, and C.~Norton.
\newblock Real-normalized differentials: limits on stable curves.
\newblock {\em arXiv:1703.07806}.
\bibitem[KK14]{kallakorotkin}
C.~Kalla and D.~Korotkin.
\newblock Baker-Akhiezer spinor kernel and tau-functions on moduli spaces of meromorphic differentials.
\newblock {\em Comm. Math. Phys.}, 331(3): 1191--1235, 2014.
\bibitem[Kri77a]{kr1}
I.~Krichever.
\newblock Integration of non-linear equations by methods of algebraic geometry.
\newblock {\em Funct. Anal. Appl.} 11(1):12--26, 1977.
\bibitem[Kri77b]{kr2}
I.~Krichever.
\newblock Methods of algebraic geometry in the theory of non-linear equations.
\newblock {\em Russian Math. Surveys}, 32(6):185--213, 1977.
\bibitem[Kri80]{kr-cm}
I.~Krichever.
\newblock Elliptic solutions of Kadomtsev--Petviashvili equations and integrable systems of particles.
\newblock {\em Funct. Anal. Appl.}, 14(1):45--54, 1980.
\bibitem[Kri86]{kr86}
I.~Krichever.
\newblock The spectral theory of ``finite-gap'' nonstationary Schrödinger operators. The nonstationary Peierls model.
\newblock
{\em Funktsional. Anal. i Prilozhen.} 20(3): 42—54 (1986).
\bibitem[Kri99]{kr-bete}
I.~Krichever.
\newblock Elliptic solutions to difference non-linear equations and nested Bethe ansatz equations.
\newblock In {\em Calogero-Moser-Sutherland models}, Springer-Verlag, New-York, 1999
\bibitem[Kri06]{flex}
I.~Krichever.
\newblock Integrable linear equations and the Riemann-Schottky problem.
\newblock In {\em Algebraic geometry and number theory}, 497--514, Progr. Math., 253, Birkh\"auser Boston, Boston, MA, 2006.
\bibitem[Kri10]{kr-schot}
I.~Krichever.
\newblock Characterizing Jacobians via trisecants of the Kummer variety.
\newblock {\em Ann. of Math.,} 172:485--516, 2010.
\bibitem[KP97]{krph}
I.~Krichever and D.~Phong.
\newblock On the integrable geometry of {$N=2$+ supersymmetric gauge theories  and soliton equations}.
\newblock {\em J. Differential Geometry}, 45:445--485, 1997.
\bibitem[LZ04]{lazv}
S.~Lando and A.~Zvonkin.
\newblock Graphs on surfaces and their applications. With an appendix by Don B. Zagier.
\newblock {\em Encyclopaedia of Mathematical Sciences}, 141. Low-Dimensional Topology, II. Springer-Verlag, Berlin, 2004.
\bibitem[McM12]{mcmullen}
C.~McMullen.
\newblock Moduli spaces of isoperiodic forms on Riemann surfaces.
\newblock {\em preprint 2012, available at http://www.math.harvard.edu/\~ctm/papers/index.html}
\bibitem[Zor06]{zorichsurvey}
\newblock A.~Zorich.
\newblock Flat surfaces.
\newblock In {\em Frontiers in Number Theory, Physics and Geometry. Volume 1: On random matrices, zeta functions and dynamical systems}, P. Cartier; B. Julia; P. Moussa; P. Vanhove (Editors), Springer-Verlag, Berlin, 2006, 439--586.
\end{thebibliography}

\end{document}